\newtheorem{thm}{Theorem}[section]
\newtheorem*{clm*}{Claim}
\newtheorem{defn}[thm]{Definition}
\newtheorem{rem}[thm]{Remark}
\newtheorem{fact}[thm]{Fact}
\newtheorem{leftfact}[thm]{\hspace*{-.88cm}Fact}
\newtheorem{ex}{Example}
\newtheorem{q}{Question}
\newtheorem{notation}{Notation}
\newcommand{\topstrut}{\rule{0pt}{2.7ex}}    
\newcommand\QQ{{\mathbb Q}}
\newcommand{\ZZ}{\mathbb{Z}}      
\newcommand\NN{{\mathbb N}}
\newcommand{\Z}{\mathbb{Z}}
\newcommand{\Q}{\mathbb{Q}}
\newcommand{\C}{\mathbb{C}}
\newcommand{\F}{\mathbb{F}}
\newcommand{\G}{\mathbf{G}}
\newcommand{\sha}{\rotatebox[origin=c]{180}{$\Pi\kern-0.361em\Pi$}}
\newcommand{\ms}{\mathsf}
\newcommand{\mc}{\mathcal}
\newcommand{\bb}{\mathbb}
\DeclareMathOperator{\Spf}{Spf}
\DeclareMathOperator{\Spm}{Spm}
\DeclareMathOperator{\Spa}{Spa}
\DeclareMathOperator{\Aut}{Aut}
\newcommand{\GL}{\mathrm{GL}}
\DeclareMathOperator{\Lie}{Lie}
\DeclareMathOperator{\fLie}{\mathscr{L}ie}
\DeclareMathOperator{\Spec}{Spec}
\newcommand{\llangle}{\langle\!\langle}
\newcommand{\rrangle}{\rangle\!\rangle}
\newcommand{\llbracket}{[\![}
\newcommand{\rrbracket}{]\!]}
\newcommand{\llbrace}{\{\!\{}
\newcommand{\rrbrace}{\}\!\}}
\title{A Global Crystalline Period Map}
\author{Michael Neaton, Andreas Pieper, Catherine Ray}
\date{}
\begin{document}

\maketitle

\begin{abstract} The crystalline period map is a tool for linearizing $p$-divisible groups.  It has been applied to study the Langlands correspondences, and has possible applications to the homotopy groups of spheres. The original construction of the period map is inherently local.  We present an alternative construction, giving a map on the entire moduli stack of $p$-divisbile groups, up to isogeny, which specializes to the original local construction.
\end{abstract}

\tableofcontents

\section*{Acknowledgement} 
This paper is the cumulation of work from a Hopkins Project Group, which began at the 2019 Arizona Winter School on Topology and Arithmetic. We thank Jared Weinstein for suggesting possible appropriate sources for a global period map. We thank Eric Chen, Zixin Wendy Jiang, and Ningchuan Zhang for also being a part of our group. We further thank Eric Chen for being actively involved during the conference and writing the sections on both the Lubin-Tate tower and on the groups acting on it, and Ningchuan Zhang for teaching us the covariant definition of the crystalline period map and answering followup questions on it. The third author is supported by the NSF GRFP under Grant Number DGE 1842165.

\section{Introduction} 

We created a global version of the crystalline period map, otherwise called the Gross-Hopkins period map. Let's first review the setup of the classical period map for abelian varieties of dimension $g$ over $\C$. In this case, we have the following correspondence, the arrows of which are projection maps.

\begin{center}
\begin{tikzcd}
                        & \widetilde{M} \arrow[ld, "\pi_1"'] \arrow[rd, "\pi_2"]                                                                                   &               \\
{M_{ab, g}(\mathbb{C})} & {(A, \phi \colon H^1_{\mathrm{sing}}(A,  \mathbb{Z}) \simeq \Lambda)} \arrow[ld, maps to] \arrow[rd, maps to] & S\mathbb{H}^+ \\
A                       &                                                                                                                       & \Lambda      
\end{tikzcd}
\end{center}

Here, $M_{ab, g}$ is the moduli stack of abelian varieties of dimension $g$, $\widetilde{M}$ is the moduli space of abelian varieties of dimension $g$ with a specified presentation $\C^g/\Lambda$, and $S\mathbb{H}^+$ is Siegel upper half space. The period map associated to this correspondence is $\pi_2 \circ\pi_1^{-1}$. This map is defined up to an action of $\mathrm{Sp}_{2g}(\Z)$, which is changing the specification $\phi$ of symplectic basis for $H^1_{\mathrm{sing}}(A,  \mathbb{Z})$.

Similarly, we get a period map anytime we have a Hodge filtration. Here, ``Hodge filtration" is very loose: we don't require the exact sequence to split. In the case of complex abelian varieties above, the Hodge filtration does split; thus, for any such abelian variety $A$ we have

$$H^1_{\text{sing}}(A, \mathbb{Z}) \otimes \mathbb{C} \simeq (\Lie A)^* \oplus \Lie A^*.$$ 

\begin{rem} Note that $H^1(A, \mc{O}_A)/\Lambda \simeq Pic^0(A)$, and that $H_1(A, \Z) \hookrightarrow H^0(A, \Omega_A) = H^{1, 0}$. We get a Hodge filtration on the first cohomology.   \end{rem} 

We may attempt to do the same process for abelian varieties over $\mathbb{C}_p:=\widehat{\overline{\Q}}_p$.  We will transfer to considering the associated $p$-divisible groups, as this captures all of the $p$-torsion information.  Furthermore, we will restrict our attention to $p$-divisible groups of dimension $1$:  our reason for this restriction is that it evidently makes the problem easier, but it is also the principle concern for applications to homotopy theory.  Thus, let $G_0$ be a $p$-divisible group of height $h$ and dimension $1$.

We will now discuss many objects that will be defined later, in order to give the reader a roadmap of the aims and of the objects used in the paper. We may write down an analogous correspondence which captures the information of one deformation neighborhood of the $p$-torsion of a fixed abelian variety $A[p^\infty]$, or, more precisely, the deformation neighborhood of a 1-dimensional $p$-divisible group $G_0$. Let $G$ be a lift of $G_0$. The period map we will get from this correspondence is the crystalline period map.

\begin{center}
\begin{tikzcd}
               & \widetilde{M}^{G_0} \arrow[ld] \arrow[rd]                                                   &                                                                 \\
M^{G_0}_\infty & {{(G, \mathbf{D}(G) \twoheadrightarrow \fLie(G))}} \arrow[ld, maps to] \arrow[rd, maps to] & \mathbb{P}(\mathbf{D}(G))                                              \\
G              &                                                                                             & \omega_G := \ker (\mathbf{D}(G) \twoheadrightarrow \mathscr{L}ie(G))
\end{tikzcd}
\end{center}

\begin{rem}
For the simplicity of the above diagram, when we write a lift $G$ above, we are implicitly including the quasi-isogeny and level structure, which we will fully exposit in Section \ref{sec: lttower}. 
\end{rem}

 \begin{center}
\begin{tabular}{ c|c } 
 \hline
 Classical Period Map & Crystalline Period Map \\
 \hline
$H^1_{\text{sing}}(A, \mathbb{Z})$ is locally constant & $D(G_0)$ is constant on $M^{G_0}_\infty$ \topstrut\\
Vary $\Lambda$  & Vary $\omega_G$ \\ 
 \hline
\end{tabular}
\end{center}

Prior to this paper, there was no known way to glue together the deformation neighborhoods of every $p$-divisible group $G_0$ into a coherent period map:  a mapping from the full moduli stack of all $p$-divisible groups to some lovely linear target. Our paper accomplishes this, using a Hecke stack as our moduli space of $p$-divisible groups with level structure (see Section~\ref{sec: glob}) and landing in a bundle of projective spaces.
    
The main interest in the crystalline period map has been its tantalizing connections to the local Langlands and Jacquet-Langlands correspondences, the unsolved Hecke Orbit conjecture \cite{oort}, and its so far disjoint application to the $K(h)$-local homotopy groups of spheres. 

The starring actor of the local Langlands and Jacquet-Langlands correspondences for $GL_h$ as approached by Harris-Taylor, Carayol, Drinfel'd, Beilinson, and others, is the analyitification of the Shimura variety $S_K$ (e.g., see page 69 of \cite{carayol}). This arises as the global counterpart to Lubin-Tate space $LT_h$ through the Cherednik-Drinfel'd theorem. The cohomology of $S_K$ is taken, and a global-to-local approach is applied to show that the local Langlands and Jacquet-Langlands correspondences hold through presenting $H^*_{\acute{e}t}(LT_h; \overline{\Q}_\ell)$ as a product representation. 

The $E_2$ page of the homotopy fixed point spectral sequence converging to the $K(h)$-local homotopy groups of spheres is the group cohomology of the Lubin-Tate action $H_{cts}^*(J, LT_*)$, i.e., the functions on Lubin-Tate space of a fixed $p$-divisible group $G_0$ being acted on by the automorphisms of $G_0$, where $J := \Aut_k(G_0)$. The crystalline period map linearizes the Lubin-Tate action by presenting it as the projectivization of $\mathbf{D}(G)$ being acted on by $J$. More specifically, it gives a $J$-equivariant Galois pro-\'{e}tale map from the generic fiber of the Lubin Tate tower $X$ to a projective space $Y$, where the actions of the Galois group (which is $\GL_h(\Q_p)$) and $J$ commute. Then, hopefully, the associated group cohomologies should satisfy $$H_{cts}^*(J, \mc{O}_Y) \simeq H_{cts}^*(J, \mc{O}_X)^{J \otimes \GL_h(\Q_p)}.$$ 
The main problems in applying this to the homotopic situation are two sides of the same coin: (1) the generic fiber of the Lubin-Tate tower has a very different collection of functions than the usual Lubin-Tate space, and (2) it's unclear what spectrum is associated to the generic fiber of the Lubin-Tate tower (in the way that Morava $E$-theory is associated to Lubin-Tate space). With the recent innovations in $p$-adic geometry, we may be able to understand how to go from the functions on the generic fiber of the Lubin-Tate tower to the functions on Lubin-Tate space via a clever resolution. This provides a glimmer of hope for the $E_2$ page. There is also hope for the $E_\infty$ page:  construct a spectrum associated to the generic fiber of the Lubin-Tate tower and relate its homotopy fixed points to those of $E$-theory. Both options have yet to be seriously explored. 

\section{The Crystalline Period Map}

Since it will be the source of our period map, we review the basic properties and definitions of the generic fiber of the Lubin-Tate tower.  Let $k$ be a finite field of characteristic $p$, and fix a $1$-dimensional formal group $G_0$ over $k$ of height $h$.

\subsection{The Lubin-Tate Tower}
\label{sec: lttower}

Let $\mathsf{Loc}_k$ be the category of Noetherian complete local $W(k)$-algebras $R$ with the data of an injection

$$ k \hookrightarrow R /\mathfrak{m}_R$$
where $\mathfrak{m}_R \subset R$ is the maximal ideal. For a ring $R$ in $\mathsf{Loc}_k$, we denote by $\kappa = R/\mathfrak{m}_R$ its residue field, and $\mathcal{O}=\mathcal{O}_R$ its ring of integers.

Consider the following stack classifying deformations of $G_0$ to formal groups over rings in $\mathsf{Loc}_k$ (with additional data). Such a functor is usually called a Lubin-Tate space and denoted $LT$.
$$M^{G_0}_{h,n}\colon R \longmapsto \left\{(G, \iota, \alpha) \middle|\, \begin{matrix} G \text{ is a formal group over }\mathcal{O} \text{ such that } G \times_R \kappa \cong G_0,\\ \iota\colon G \times_R \kappa_R \to G_0 \times_k \kappa \text{ is a quasi-isogeny,}\\
\alpha\colon (\ZZ/p^n\ZZ)^h \xrightarrow{\sim} G_0[p^n] \text{ is an isomorphism of group schemes.}
\end{matrix}\right\}$$

The functors $\{M^{G_0}_{h, n}\}_{n \in \NN}$ form an inverse system by
$$(G, \iota, \alpha) \mapsto (G, \iota, \alpha \circ p)$$
so we can assemble them into a tower
$$M_{h, \infty}^{G_0} := \varprojlim_n \, M^{G_0}_{n, h}$$
called the \textbf{Lubin-Tate tower}. Its moduli interpretation is deformations of our fixed formal group $G$ along with infinite level structure and a quasi-isogeny:
$$\mathcal{M}^{G_0}_{h,\infty}\colon R \longmapsto \left\{(G, \iota, \alpha) \middle|\, \begin{matrix} G \text{ is a formal group over }\mathcal{O} \text{ such that } G \times_R \kappa \cong G_0,\\ \iota\colon G \times_R \kappa_R \to G_0 \times_k \kappa \text{ is a quasi-isogeny,}\\
\alpha\colon \ZZ_p^h \xrightarrow{\sim} T_pG_0 \text{ is an isomosphism of group schemes}
\end{matrix}\right\}$$
where $TG_0:=\varprojlim_n G_0[p^n]$ denotes the Tate module of $G_0$. We take the generic fiber of this tower as our source. The moduli interpretation of the \emph{generic fiber} of the Lubin-Tate tower is deformations of our fixed formal group $G$ along with infinite \textit{rational} level structure and a quasi-isogeny:
$$\mathcal{M}^{G_0}_{h,\infty}\colon R \longmapsto \left\{(G, \iota, \alpha) \middle|\, \begin{matrix} G \text{ is a formal group over }\mathcal{O} \text{ such that } G \times_R \kappa \cong G_0,\\ \iota\colon G \times_R \kappa_R \to G_0 \times_k \kappa \text{ is a quasi-isogeny,}\\
\alpha\colon \Q_p^h \xrightarrow{\sim} T_pG_0 \otimes_{\Z_p} \Q_p \text{ is an isomosphism of }\Q_p\text{ vector spaces}
\end{matrix}\right\}.$$
Since we've taken the generic fiber, $p$ is now inverted, and the $p$-divisible groups are now being considered up to isogeny.

\begin{rem} We only use height as part of the moduli problem of the source of the period map in order to make the period map we will construct a Galois cover: it serves no other functional purpose. \end{rem}

\subsection{Construction of the Crystalline Period Map}

We summarize and add detail to the material of Dwork's original construction for height one \cite{katz}, as well as the Gross-Hopkins papers generalizing this construction to all heights \cite{gh}, \cite{gh2}.                                                                                                                                            
\subsubsection{Background on the Dieudonn\'{e} crystal}
\begin{rem} There are many ways to define the Dieudonn\'{e} crystal, all of which will work to do this construction. We will stick to the one which requires the least prerequisites. To see other ways of constructing the crystalline period map, contact C. Ray for a copy of {\normalfont Three perspectives on the crystalline period map}. \end{rem}

\begin{defn} Let $R$ be any ring. We define $\ms{VectGrpSch}_{/R}$ as the full subcategory of $\ms{GrpSch}_{/R}$ with objects given by finite direct sums of $\G_a$ (i.e., ``vector bundles"). \end{defn}

\begin{defn} Let $\ms{pDiv}_{/R}$ be the category of $p$-divisible groups over the ring $R$ up to isomorphism. For any $X \in \ms{pDiv}_{/R}$, we call the extension $$VX \to EX \to X$$ the \textbf{universal vector extension}  where $VG \in \ms{VectGrpSch}_{/R}$, if for any other extension $$V \to E \to X$$ of $X$ by $V \in \ms{VectGrpSch}_{/R}$, there are unique $f, F$ such that the diagram commutes.

\[ \begin{tikzcd}
VX \arrow[d, "f"] \arrow[r] & EX \arrow[d, "F"] \arrow[r] & X \arrow[d, "id"] \\
V \arrow[r]                          & E \arrow[r]                          & X                
\end{tikzcd} \]

\end{defn} 

This sequence always exists for formal groups. We linearize this sequence to: 

$$\fLie(VG) \to \fLie(EG) \to \fLie(G).$$

\begin{defn} The \textbf{Dieudonn\'{e} crystal} of a formal group $G$ over $R$ is $\fLie(EG)$. Note that it is an fppf sheaf once it is evaluated at any nilpotent thickening of $R$. \end{defn}

\begin{rem} We referred to the Dieudonn\'{e} crystal $\fLie(EG)$ as $\mathbf{D}(G)$ in the introduction. \end{rem}

\subsubsection{Construction of the Map}
Let us introduce a bit more notation before we construct the map. Take $G_{univ}$ to be the universal deformation of the formal group $G_0$.

\begin{notation} We define $$0 \to \mc{W} \to \mc{M} \to \mc{L} \to 0$$ to be the sequence of fppf sheaves:

$$0 \to \fLie(VG_{\mathrm{univ}}) \to \fLie(EG_{\mathrm{univ}}) \to \fLie(G_{\mathrm{univ}}) \to 0 \qquad \in \ms{Bun}_{/LT}$$
\textit{evaluated} on the thickening $A:= W(k)[[u_1, ..., u_{h-1}]] \twoheadrightarrow k[[u_1, ..., u_{h-1}]] =:A_0  $. (Before evaluating, its a sequence of crystals, not a sequence of fppf sheaves.) Note that $\Spf A \simeq LT$, where the latter is Lubin-Tate deformation space of a given fixed height $h$ formal group over $k$. We suppress $h$ in our notation, because $h$ is fixed.
\end{notation}

\begin{rem} The crystalline map is a ``Hodge variation map": that is, we wish to look at how $\mc{W}$ varies in $\mc{M}$. Further, we want this map to be $J$-equivariant, with respect to the action of $J := Aut_k(G_0)$ described in the next section.  \end{rem}

\begin{fact} Different lifts of $G_0$ up to isogeny correspond bijectively to inclusions $\fLie(VG) \hookrightarrow \fLie(EG)$. In other words, $\fLie(VG)$ is highly sensitive to the lift $G$ of $G_0$ chosen. \end{fact}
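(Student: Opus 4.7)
The plan is to derive this from Grothendieck–Messing deformation theory applied to the pro-PD thickening $A = W(k)\llbracket u_1, \dots, u_{h-1}\rrbracket \twoheadrightarrow A_0 = k\llbracket u_1, \dots, u_{h-1}\rrbracket$, whose kernel is generated by $p$ and carries the canonical divided powers coming from $W(k) \to k$. The two essential inputs are (i) the crystal property of $G \mapsto \fLie(EG)$ and (ii) the Grothendieck–Messing equivalence between lifts of a $p$-divisible group and lifts of its Hodge filtration.

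First I would address the forward direction. Any lift $G$ of $G_0$ to $A$ functorially produces the universal vector extension sequence, and hence the inclusion $\fLie(VG) \hookrightarrow \fLie(EG)$. The key point is that the crystal property identifies the target canonically with $\mathbf{D}(G_0)(A)$, independently of the lift $G$. Thus all of the information distinguishing $G$ from $G_0$ is concentrated in the submodule $\fLie(VG) \subset \fLie(EG)$. Moreover, a quasi-isogeny between two lifts $G$ and $G'$ of $G_0$ induces, via functoriality of $\fLie \circ E$ and the crystal property, an identification of the corresponding submodules after inverting $p$, so the assignment descends to the isogeny category.

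For the reverse direction I would invoke Grothendieck–Messing: lifts of $G_0$ over the PD thickening $A \twoheadrightarrow A_0$ correspond bijectively to lifts of the Hodge filtration of $\mathbf{D}(G_0)(A_0)$ to a submodule of $\mathbf{D}(G_0)(A) = \fLie(EG)$. Combining this with the forward direction gives the desired bijection between lifts up to isogeny and submodule inclusions.

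The main obstacle is interpreting ``inclusions $\fLie(VG) \hookrightarrow \fLie(EG)$'' correctly in the isogeny category. A priori, Grothendieck–Messing as stated above gives bijections only with filtrations reducing mod $p$ to the original Hodge filtration of $G_0$; but after inverting $p$ and quotienting by quasi-isogenies of $G_0$, any rank-$(h-1)$ submodule of the rank-$h$ module $\fLie(EG)$ occurs. Verifying this transitivity — i.e., that $\mathrm{Aut}(G_0)$-quasi-isogenies act so as to sweep out every admissible submodule — is the technical crux, and matches the moduli interpretation of the source of the period map set up in the Lubin–Tate tower discussion above.
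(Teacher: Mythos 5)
Your route---identify $\fLie(EG)$ with the crystal $\mathbf{D}(G_0)$ evaluated on the thickening via the crystal property, then trade lifts of $G_0$ for lifts of the Hodge filtration via Grothendieck--Messing---is exactly the mechanism this Fact rests on (the paper states it without proof, in effect deferring to Messing and to \cite{gh}), and your forward direction, including the passage to the isogeny category via rigidity of quasi-isogenies, is fine. The genuine gap is the step you yourself flag and then do not carry out: surjectivity. Grothendieck--Messing only produces lifts from filtrations that reduce, modulo the PD ideal, to the Hodge filtration $\fLie(VG_0)\subset \fLie(EG_0)$ over $A_0$; the Fact, as it is used for the period map on the generic fiber, asserts that \emph{every} admissible inclusion into $\fLie(EG)[1/p]$ (equivalently, every point of $\mathbb{P}(\mathbf{D}(G_0))$ over the relevant base) is realized by some lift equipped with a quasi-isogeny. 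That is precisely the surjectivity of the Gross--Hopkins period map, a theorem with real content: Gross--Hopkins prove it by showing, via convergence and valuation estimates on the horizontal sections $\ell_i$, that the image of the deformation space contains an explicit admissible open region of $\mathbb{P}^{h-1}$, and only then covering the rest by translates under quasi-isogenies. It does not follow formally from Grothendieck--Messing plus the moduli description of the Lubin--Tate tower, so your proof is incomplete exactly at the point where the statement stops being formal.

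Moreover, the mechanism you suggest for that crux---that quasi-isogenies of $G_0$ act so as to ``sweep out every admissible submodule,'' i.e.\ a transitivity statement for $J=\Aut_k(G_0)$ (or the full quasi-isogeny group $D^\times$)---is false as stated. The action of $D^\times$ on $\mathbb{P}^{h-1}(\mathbb{C}_p)$ is through matrices with entries in a finite extension of $\Q_p$, so the orbit of a point with coordinates in $\overline{\Q}_p$ never leaves $\mathbb{P}^{h-1}(\overline{\Q}_p)$; the action is very far from transitive, and what is actually true (and needs the analytic input above) is that the $D^\times$-translates of one explicit open piece of the image cover the target. Two smaller points worth a sentence each if you write this up: applying Grothendieck--Messing to $A\twoheadrightarrow A_0$ with kernel $(p)$ requires the (topological) nilpotence caveat on the canonical divided powers (an issue at $p=2$), handled by a limit over nilpotent thickenings; and injectivity ``up to isogeny'' should be phrased for pairs (lift, quasi-isogeny), since it is the quasi-isogeny that pins down the identification of $\fLie(EG)[1/p]$ with the fixed isocrystal in which the filtrations are being compared.
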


As we'll need it to define our period domain, let's briefly recall Grothendieck's projective space.

\begin{defn} Let $\mathcal{F}$ be a locally free sheaf over $X$. Then the (Grothendieck) projective space of $\mathcal{F}$, denoted by $\mathbb{P}(\mathcal{F})$, has the following moduli description.

$$\mathbb{P}\colon \mathcal{F} \longmapsto \left\{(\mathcal{L}, \pi) \, \middle| \, \begin{matrix} \mathcal{L} \text{ is a line bundle on } X\\ \pi\colon \mathcal{F} \to \mathcal{L} \text{ is surjective}\end{matrix}\right\}$$
\end{defn}

\begin{ex} If $X = \Spec k$, then we are describing lines over a point, and how they embed in a general space.  \end{ex}

Now we are ready to construct the map---let's go over the game plan. Recall that $J := \Aut_k(G_0)$. We will define the map on each rigidfied Lubin-Tate space $LT$, which will automatically define it on the generic fiber of the ridgidified Lubin-Tate tower. The rigidification is necessary for our map to exist, as we will see during Step 2 of the course of the construction.  To define our coveted $J$-equivariant map $$\pi \colon LT \to \mathbb{P}(M),$$ it is equivalent to take the following three steps: 
\begin{enumerate}
    \item  Define a $J$-equivariant bundle $\pi^*\mathcal{O}(1)$;
    \item  Show there exists a basis of sections which don't all vanish on any point of $\Gamma(\bb{P}(M), \mc{O}(1))$;
    \item  Show there is a surjective map from $\Gamma(\bb{P}(M), \mc{O}(1)) \to \Gamma(LT, L)$ respecting the $J$-action. 
\end{enumerate}

\begin{rem} Steps 2 and 3 are an unpacking of the term base-point free map.  \end{rem}

\begin{clm*} We can complete each of the above 3 steps and thus construct our map. \end{clm*}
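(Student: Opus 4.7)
The plan is to take $L := \mathcal{L} = \fLie(G_{\mathrm{univ}})$ for the sought line bundle on $LT$ and realize the already-defined surjection $\mathcal{M} \twoheadrightarrow \mathcal{L}$ as the pullback, via a yet-to-be-constructed $\pi : LT \to \mathbb{P}(\mathcal{M})$, of the tautological quotient on $\mathbb{P}(\mathcal{M})$. For Step 1, $J = \Aut_k(G_0)$ acts on $LT$ by post-composition on the quasi-isogeny $\iota$ in the moduli data, and this action lifts canonically to the universal deformation $G_{\mathrm{univ}}$. By functoriality of both the universal vector extension $E(-)$ and $\fLie$, it acts on the entire sequence $\mathcal{W} \to \mathcal{M} \to \mathcal{L}$, giving $\mathcal{L}$ the structure of a $J$-equivariant line bundle on $LT$; we declare $\pi^*\mathcal{O}(1) := \mathcal{L}$.

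For Step 2, I would use that the quasi-isogeny $\iota$ (i.e., the rigidification of $LT$) together with rigidity of the Dieudonn\'e crystal yields a canonical identification $\mathcal{M} \cong \mathbf{D}(G_0) \otimes_{W(k)} A$ of vector bundles on $LT = \Spf A$. This is where the rigidification is essential: it is what lets us identify $\mathbf{D}$ of the generic lift with $\mathbf{D}$ of $G_0$. Fixing a $W(k)$-basis $e_1,\dots,e_h$ of $\mathbf{D}(G_0)$ thus produces $h$ trivializing global sections of $\mathcal{M}$, and their images under $\mathcal{M} \twoheadrightarrow \mathcal{L}$ give the candidate basis of Step 2. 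By the fact recalled just before the claim, the line $\mathcal{W} \subset \mathcal{M}$ genuinely varies with the lift, so no geometric point of $LT$ is swept into $\mathbb{P}(\mathcal{W})$; equivalently, the chosen sections of $\mathcal{L}$ never simultaneously vanish.

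Step 3 is then essentially formal. Because $LT = \Spf A$ is affine, the global-sections functor is exact on quasi-coherent sheaves, so $\mathcal{M} \twoheadrightarrow \mathcal{L}$ yields a surjection $\Gamma(LT, \mathcal{M}) \twoheadrightarrow \Gamma(LT, \mathcal{L})$; Grothendieck's moduli description packages it into a morphism $\pi : LT \to \mathbb{P}(\mathcal{M})$ with $\pi^*\mathcal{O}(1) = \mathcal{L}$, and naturality of $E(-)$ and $\fLie$ upgrades this to a $J$-equivariant map $\Gamma(\mathbb{P}(\mathcal{M}), \mathcal{O}(1)) \to \Gamma(LT, \mathcal{L})$. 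Passage to the generic fiber, compatibly with the transition maps in the tower, then produces the crystalline period map on the generic fiber of $\mathcal{M}^{G_0}_{h,\infty}$.

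The main obstacle is Step 2: carrying out the identification precisely. One must justify extending crystal evaluation across the non-finite-type PD-thickening $A \twoheadrightarrow k[[u_1,\dots,u_{h-1}]]$ via an $\mathfrak{m}_A$-adic continuity argument, and then verify that the resulting trivialization intertwines the $J$-action transported from $LT$ with the $J$-action on $\mathbf{D}(G_0)$ induced by $J = \Aut_k(G_0)$ acting on $G_0$ itself. Once this compatibility is in place, Steps 1 and 3 follow formally, and $J$-equivariance of $\pi$ is automatic from the construction.
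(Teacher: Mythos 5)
Your Steps 1 and 3 follow essentially the same route as the paper (equivariance of $\mc{L}=\fLie(G_{\mathrm{univ}})$ by functoriality; surjectivity on global sections from affineness/vanishing of higher cohomology), but Step 2 is where the real content lies, and there your argument has a genuine gap. You conflate two different meanings of ``rigidification'': the quasi-isogeny $\iota$ in the moduli problem (rigidified deformations) versus rigid-analytification, i.e.\ passage from $\Spf W(k)\llbracket u_1,\dots,u_{h-1}\rrbracket$ to its generic fiber $\Spm W(k)\llangle u_1,\dots,u_{h-1}\rrangle$. The paper means the latter, and it is indispensable: your claimed identification $\mc{M}\cong \mathbf{D}(G_0)\otimes_{W(k)}A$ as vector bundles over $LT=\Spf A$ is false integrally. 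The quasi-isogeny only identifies fibers over the residue field; a trivialization of $\mc{M}$ over all of $LT$ compatible with the crystal structure must come from a full set of \emph{horizontal} sections for the connection, and these are the $p$-typical pre-logarithms $\ell_0(x)=\sum_k x^{(p^h)^k}/p^k$, $\ell_i(x)=\ell_0(x^{p^i})/p$, whose $p$-power denominators prevent convergence on the formal scheme. By Katz's theorem (3.1.1 in \cite{katz}) they converge only after inverting $p$, i.e.\ on the rigid generic fiber --- which is exactly why the Gross--Hopkins map is defined on the generic fiber of the tower and not on $\Spf A$ itself. So the nowhere-vanishing basis of sections you need in Step 2 does not exist where you assert it; it exists only for $\mc{M}^{\mathrm{rig}}$, and your proposed fix (an $\mathfrak{m}_A$-adic continuity argument for evaluating the crystal on the non-finite-type PD-thickening) does not address this obstruction.

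A related, smaller issue: without horizontality, trivializing sections of $\mc{M}$ would only repackage the tautological surjection $\mc{M}\twoheadrightarrow\mc{L}$ as a section of the projective bundle $\mathbb{P}(\mc{M})\to LT$, which carries no period information. It is the horizontal (rigid-analytic) trivialization, i.e.\ $V^{\mathrm{rig}}=\Gamma^{\nabla}(LT^{\mathrm{rig}},\mc{M}^{\mathrm{rig}})$ together with $\Gamma(\bb{P}(V),\mc{O}(1))\simeq V$, that collapses the target to the fixed $\mathbb{P}^{h-1}=\mathbb{P}(\mathbf{D}(G_0)[1/p])$ on which $J$ acts linearly, making $\pi$ a genuine $J$-equivariant period map. (Also note $\mc{W}=\fLie(VG_{\mathrm{univ}})$ has rank $h-1$ rather than being a line --- it is $\mc{L}$ that is the line bundle --- though this does not affect the non-vanishing argument, since surjectivity of $\mc{M}^{\mathrm{rig}}\twoheadrightarrow\mc{L}^{\mathrm{rig}}$ already guarantees that the images of generating sections cannot all vanish at a point.)
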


\begin{proof} \noindent
\begin{itemize}
\item Proving 1: Take $\mc{L} := \fLie(G_{\mathrm{univ}})_{A \twoheadrightarrow A_0}$ as above. This map takes, for each point $a$ on Lubin-Tate space $LT$, the corresponding formal group $G_a$: $$G_a \mapsto \fLie(G_{univ})|_a = \fLie(VG_a) := \ker(\fLie(EG_a) \to \fLie(G_a)). $$ Geometrically, this map looks at how $\fLie(VG)$ is varying inside of $\fLie(EG)$ as we take different lifts of $G_0$. Further, $\mc{L}$ is automatically $J$-equivariant by construction.
\item Proving 2: Before we begin, we will need the following fact.
\begin{leftfact} Let $\mc{F}$ be locally free over an affine (formal) scheme $X$.  Then
$\Gamma(X, \mc{F}) \simeq \Gamma(\bb{P}(\mc{F}), \mc{O}_{\bb{P}(\mc{F})}(1))$. \end{leftfact}
Thus, if we define $V := \Gamma^{\nabla}(LT, \mc{M})$, then $\Gamma(\bb{P}(V), \mc{O}(1)) \simeq V$. 

Further, note that $$V:= \Gamma^{\nabla}(LT, \mc{M}) \hookrightarrow \Gamma(LT, \mc{M}) \simeq \Gamma(\bb{P}(M), \mc{O}(1)).$$ Thus, if we construct a basis of horizontal sections on $V$ which don't vanish at any point, we may push it forward to a basis of sections which don't vanish at any point on $\Gamma(\bb{P}(M), \mc{O}(1))$.

\begin{leftfact} $V$ doesn't have a basis of horizontal sections, but it does when we rigidify (see Remark~\ref{rigidrem}).\end{leftfact} 

We rigidify everything; now $V^{\mathrm{rig}}$ has a basis of sections, and since it includes into $\Gamma(LT^{\mathrm{rig}}, M^{\mathrm{rig}})$, this also has a basis of sections. 
\item Proving 3:  Let us take the long exact sequenced induced from the short exact sequence of sheaves $$\mc{L}^{\mathrm{rig}} \to \mc{M}^{\mathrm{rig}} \to \mc{L}^{\mathrm{rig}}.$$ To be precise, we consider
$$\Gamma(LT^{\mathrm{rig}}, \mc{W}^{\mathrm{rig}}) \to \Gamma(LT^{\mathrm{rig}}, \mc{M}^{\mathrm{rig}}) \to \Gamma(LT^{\mathrm{rig}}, \mc{L}^{\mathrm{rig}}) \to R^1\Gamma(LT^{\mathrm{rig}}, \mc{W}^{\mathrm{rig}}) \to \cdots.$$
Note that $R^1\Gamma$ of a quasicoherent sheaf over an affine space vanishes. Therefore, $\Gamma(LT^{\mathrm{rig}}, \mc{M}^{\mathrm{rig}}) \to \Gamma(LT^{\mathrm{rig}}, \mc{L}^{\mathrm{rig}})$ is surjective.
\end{itemize}
\end{proof}

\begin{rem} \label{rigidrem} Warning: It's a hard fact to justify rigidification as a necessity. This reqiures us to use a different definition of the Dieudonn\'{e} crystal. We use that $$ \fLie(EG_{univ})_{A \twoheadrightarrow A_0} \simeq PH^1_{dR}(G^t/A),$$ where $G^t$ is the Cartier dual.  We know that $PH^1_{dR}(G^t/A)$ has a basis of horizonal sections in the form of $p$-typical ``pre-logarithms" \begin{align*} 
\ell_0(x) &= \sum_k \frac{x^{(p^{h})^k}}{p^k}\\
\ell_i(x) &= \frac{\ell_0(x^{p^i})}{p} \qquad i = 1, ..., h-1 
\end{align*} which are power series. These power series do not converge on all of $LT$.  But, by a theorem of Katz (3.1.1 in \cite{katz}), these power series converge on all of $LT$ when we ``invert p" or ``take the generic fiber"; that is, we rigidify the formal scheme $\Spf W(k)\llbracket u_1, ..., u_{h-1}\rrbracket$ to the rigid analytic scheme $\Spm W(k)\llangle u_1, ..., u_{h-1}\rrangle$. Here, $W(k)\llangle u_1, ..., u_{h-1}\rrangle$ is the ring of convergent power series over the unit open polydisc. Note that this ring is denoted by $K\llbrace u_1,\cdots,u_{h-1}\rrbrace$ in \cite{katz}. \end{rem}

\subsection{Compatibility with Group Actions}

Now that we've defined all the maps in the triangle 
$$
\begin{tikzcd}
 & \mathrm{LT}_h \arrow{dr}{\pi_{\mathrm{GH}}} \\
\mathcal{M}^{G_0}_{h,\infty} \arrow{ur}{f} \arrow{rr}{\pi} && \mathbb{P}.
\end{tikzcd}
$$
We will study the groups that act on these spaces for which the maps involved are equivariant. In particular, we will see that all three maps are (pro-finite) torsors for some $p$-adic group. 

We will be interested in the following three groups: $J = \mathrm{Aut}_k(G_0)$ the automorphism group of $G_0$ over $k$, $\mathrm{GL}_h(\ZZ_p)$, and $W_{\QQ_p}$ the Weil group of $\QQ_p$. The first two groups act in an obvious way on the Lubin-Tate tower as follows: for a point $(G, \iota, \alpha)$ in the Lubin-Tate tower, an element $\sigma \in J$ and an element $g \in \mathrm{GL}_h(\QQ_p)$, the actions are
$$\sigma \cdot (G, \iota, \alpha) = (G, \sigma \circ \iota, \alpha),$$
$$g \cdot (G, \iota, \alpha) = (G, \iota, \alpha \circ g).$$

The action of the Weil group is more subtle. Recall that the Weil group fits inside a short exact sequence
$$0 \longrightarrow I_{\QQ_p} \longrightarrow W_{\QQ_p} \longrightarrow \langle \mathrm{Frob}\rangle^\ZZ \longrightarrow 0.$$
Let $w$ be an element in $W_{\QQ_p}$, and let $n(w)$ be its image in $\langle\mathrm{Frob}\rangle^\ZZ$. Then $w$ acts on a point in the Lubin-Tate tower by
$$w \cdot (G, \iota, \alpha) = (G^w, \iota^{w} \circ \mathrm{Frob}^n, \alpha^w).$$

\begin{rem} By understanding the $\overline{\Q}_{\ell}$ \'{e}tale cohomology of the Lubin-Tate tower as a product representation of the above three groups, one realizes both the local Langlands and Jacquet-Langlands correspondences for $GL_h$, for all $h \geq 0$. \end{rem}

\section{The Global Crystalline Period Map}

We will first recount for the reader the background needed to understand our construction of the global crystalline period map. We set the stage with two definitions of the Fargues-Fontaine curve and their relation, in order to establish how $p$-divisible groups are bundles (with modification) on the Fargues-Fontaine curve. With this viewpoint established, we give a brief idea of what we mean by a ``global crystalline period map," and then give the reader some background on the source of our period map, a Hecke stack which we call $\mathrm{Hecke}$. Finally, we describe our construction.

\subsection{The Fargues-Fontaine Curve $X_{C}$}

\subsubsection{Adic Perspective on $\mc{X}_{C}$}

The illustrative picture we have in mind in this section is Figure 5 page 84 of \cite{adicpic} (this figure also appears in various other guises elsewhere in \cite{adicpic}). Let $p$ be a fixed prime. Let $C$ be a complete algebraically closed field which contains $\Q_p$, e.g., $\widehat{\overline{\Q}}_p$. We take its tilt $C^\flat$ to get a complete algebraically closed field which contains $\F_p((T))$.  Let $W(\mathcal{O}_{C^\flat})$ be the Witt vectors of the ring of integers of $C^\flat$. 

Let $\varpi$ be a pseudo-uniformizer of $\mc{O}_{C^\flat}$ (recall a pseudo-uniformizer is simply any old element with norm in $(0,1)$, the choice of which doesn't matter). Let $x_k \in \Spa(W(\mc{O}_{C^\flat})$ be the unique non-analytic point, which is the valuation which factors through the residue field (i.e., where $p=\varpi=0$).  Any given valuation $x$ has a maximal generalization $\widetilde{x}$, which is necessarily of rank one.

\begin{rem}  We will use the standard notation in adic theory, where if $x\colon R\to\Gamma$ is a valuation, we write $|f(x)|$ for the value in $\Gamma$ of $x$ evaluated on $f\in R$.  The notation comes from thinking of the ring $R$ as a ring of functions, and thinking of $x$ as an absolute value.  This tends to improve intuition.
\end{rem}

Now we are ready to define the map $\kappa$, which sends a valuation to its absolute value.

\begin{align*} 
\kappa\colon \Spa(W(\mc{O}_{C^\flat}))\setminus\{x_k\} & \to [0, \infty] \\
x & \mapsto \frac{\log|\varpi(\widetilde{x})|}{\log|p(\widetilde{x})|}
\end{align*}

\begin{rem}  Looking at Figure 5 in \cite{adicpic}, one can see $\kappa$ as the ``angle''.  Then, $x_k$ is the ``origin'' with $p=\varpi=0$:  thus, we remove this point as it doesn't have a well-defined ``angle''.
\end{rem}

We now take $\mc{Y} \colon= \kappa^{-1}((0, \infty))$, which in effect removes the axis lines $\varpi = 0$ and $p=0$. In other words, $\mc{Y} \colon= \Spa(W(\mc{O}_{C^{\flat}}))\setminus \{p\varpi=0\}.$ 

Note if we let Frobenius act on $[0, \infty]$ via multiplication by $p$, then $\kappa$ is Frobenius equivariant. Thus, $\mc{Y}$ carries an action of Frobenius $\phi$. Note $\mc{Y}$ can be pictured as a minimal hypersurface on a coil, where the action of Frobenius shifts down. In other words, Frobenius acts by a full $360^\circ{}$ rotation. The quotient of $\mc{Y}$ by integer powers of Frobenius resembles an open disk. This ``open disk" is the Fargues-Fontaine curve. 

\begin{defn} The \textbf{Fargues-Fontaine curve} is defined as $\mc{X}_{R} := \mc{Y}/\phi^\Z$ \end{defn}

\begin{rem} $\mc{X}_{C}$ is called a curve because it is regular, Noetherian, dimension 1, and has a ``degree" function. However, it is not of finite type. \end{rem}

\begin{rem} Note that the relative Fargues-Fontaine curve is obtained the same way as the Fontaine-Fargue curve, we just begin with $\Spa(W(\mc{O}_{C^\flat}), W(\mc{O}_{C^\flat})^+)$ instead. \end{rem}

\subsubsection{Scheme Perspective on $X_{C}$}

There is another ``equivalent" way to define the Fargues-Fontaine curve, which we briefly exposit here while maintaining the adic perspective. We denote the adic construction by $\mc{X}_C$, and the scheme-theoretic construction by $X_C$. Otherwise, we continue to use the notation defined at the beginning of the previous section. We begin with $W(\mc{O}_{C^\flat})$, commonly called $A_{\mathrm{inf}}$. Let $\theta\colon W(\mathcal{O}_{C^\flat})\twoheadrightarrow\mathcal{O}_C$ be the standard map from $p$-adic Hodge theory.  We will only really use the fact that $\theta$ is surjective, and that its kernel is $\varpi - p$.  The corresponding valuation coming from $\theta$, $x_C$, satisfies $\kappa(x_C)=1$.

We invert $\ker \theta$, and get $B_{\mathrm{cris}}^{+} := W(\mc{O}_{C^{\flat}})[(\ker \theta)^{-1}]$, this is equivalent to removing the axis $\varpi = 0$. Then, we invert $p$ to remove the axis $p=0$, $B_{\mathrm{cris}} := W(\mc{O}_{C^{\flat}})[(\ker\theta)^{-1}][p^{-1}]$. By removing both lines, we have also removed the unique non-analytic point $x_k$ which is at $\varpi=p=0$. Thus, $\Spec B_{\mathrm{cris}}$ is the scheme theoretic equivalent of $\mc{Y}$ in the above section. Restricting to where Frobenius acts like powers of $p$ on $B_{\mathrm{cris}}$ is equivalent to the adic approach of taking the quotient of $\mc{Y}$ by Frobenius. 

\begin{defn} The \textbf{Fargues-Fontaine curve} is 
$$X_{C} := \mathrm{Proj}\ \bigoplus_{d \geq 0} \left( W(\mc{O}_{C^\flat})\left[\frac{1}{\ker \theta}\right]\left[\frac1p\right] \right)^{\phi = p^d}. $$ \end{defn}

\subsubsection{$p$-divisible Groups as Bundles on $X_{C}$ with Modifications} 

We have these two different Fargues-Fontaine curves, but we want to consider bundles on them.  The adic perspective is better, because it connects better to the perfectoid picture.  Luckily, the categories of vector bundles over these objects are equivalent by a GAGA result.

Let $G \in \mathsf{pDiv}_{/\mc{O}_C}$, and $G_0$ be its corresponding special fiber in $\mathsf{pDiv}_{/ \mc{O}_{C^\flat}}$. 

\begin{rem} In the language from the beginning of this paper, for $k$ a characteristic $p$ field, $G_0 \in \ms{pDiv}_{/k}$, and $G$ is a characteristic 0 lift of $G_0$ (for example, to $W(k)$). \end{rem}

For those encountering a $p$-divisible group presented as a bundle $\mc{E}(G)$ with extra data for the first time, we give a conceptual overview before introducing its definition. We freely move between the adic and scheme theoretic perspectives. 

\begin{enumerate}
\item We begin with the crystal $\fLie(EG_0)$ evaluated on a chosen pd-thickening of $\mc{O}_{C^\flat}$.  This is called the Dieudonn\'{e} module $\mathbf{D}(G)$ (it doesn't depend on choice of lift either). This is a $W(\mc{O}_{C^\flat})$-module, and thus a quasi-coherent sheaf $D$ on $\Spa(W(\mc{O}_{C^\flat}))$, which carries  an action of Frobenius. 
\item We pullback this sheaf $D$ (before modding out by Frobenius) via $$ \mc{Y}  \xrightarrow{f} \Spa(W(\mc{O}_{C^\flat})) $$ The Frobenius comes with the pullback. This is our vector bundle $\mc{E}(G) := f^*D$.
\item $\mc{E}(G)$ only remembers the special fiber of the $p$-divisible group. The modification remembers the choice of the characteristic 0 lift $G$. Note everything is up to isogeny, as we inverted $p$.
\end{enumerate} 

Equipped with intuition, we return to rigor. We define the vector bundle $\mathcal{E}(G)$ associated to a $p$-divisible group $G$ as follows. 

\begin{defn} The vector bundle associated to $G \in \mathsf{pDiv}_{/\mc{O}_C}$ is $$\mathcal{E}(G) := \left(\bigoplus_{d \geq 0} \mathbf{D}(G)\left[\frac1p\right]^{\phi = p^d}\right)\hspace*{-.07cm}\raisebox{2.2ex}{$\widetilde{\hspace*{.3cm}}$}$$ where $\widetilde{M}={M}^{\widetilde{\hspace*{.15cm}}}$ denotes the sheaf associated to the module $M$.\end{defn}

Note that $\mc{E}(G)$ is not equivalent to the datum of a $p$-divisible group. To be equivalent, we must also equip the bundle $\mathcal{E}(G)$ with a ``modification''. In this case, a modification is a map from the Tate module of the $p$-divisible group $G$ to the pullback of our bundle. 

Let us now set up the tools we need to define the modification. Our exposition of modifications is mostly an artistic interpretation of Section 5.1 of \cite{moduli}. Note that the Dieudonn\'{e} module up to isogeny $\mathbf{D}(G)$ over a complete algebraically closed field splits into $\mathbf{D}(G) = \bigoplus_{r, s} \mathbf{D}(G)^{\phi^r = p^s}$. We may map from $\mathbf{D}(G) \to \mathbf{D}(G)^{\phi = p^d}$ by projecting onto only the slope $d$ components. As before, we let $TG$ be the Tate module of the $p$-divisible group $G$. We may then construct the following composite map for each $d$: 

$$a^d\colon TG \to G \to \mathbf{D}(G) \twoheadrightarrow \mathbf{D}(G)^{\phi = p^d}.$$

Let $\mathcal{F}(G) := TG  \otimes_{\mc{O}_C} \mc{O}_{X_{C}}$. Then putting together the $a^d$ induces a map of quasi-coherent sheaves on $X_{C}$:

$$b\colon \mathcal{F}(G) \hookrightarrow \mathcal{E}(G).$$
Let $i_\infty\colon C \hookrightarrow X_{C}$ be the inclusion of the point at infinity into the relative Fargues-Fontaine curve. We can think of this in two ways: (1) it maps the special valuation $\theta$ composed with $\mc{O}_C \hookrightarrow C$ to the point at infinity; (2) this is the inclusion of at least a point of the $\varpi = p$ line into the Fargues-Fontaine curve.

When we take the pullback of $b$ over $i_\infty$, we get 

$$i_\infty^*b\colon TG \hookrightarrow i_\infty^*\mathcal{E}(G).$$ 

We will use the maps $a^d, b$ and $i_\infty^*b$ during our definition of the Hecke stack in Section \ref{sec: hecke}.

\begin{defn} Let $\mc{E}$ and $\mc{F}$ be vector bundles over a stack $X$. We say that $\mc{E}$ is a modification of $\mc{F}$ along the points $x, y \in X$ if there is an isomorphism: $$f\colon \mc{F}|_{X\setminus\{x\}} \simeq \mc{E}|_{X \setminus\{y\}}.$$  \end{defn}

\subsection{The Meaning of Globalizing a Period Map}
\label{sec: glob}

We discussed in Section \ref{sec: lttower} the deformation space $\mathcal{M}^{G_0}_{h, \infty}$ of a $p$-divisible group $G_0$ with a fixed height $h$ and infinite level structure.  In this section, we will consider instead the entire moduli stack of $p$-divisble groups of height $h$ with infinite level structure,  $\mathcal{M}_{h, \infty}$. We seek a globalization of the crystalline period map. Heuristically, what we mean by globalization of the period map is: 
\[
\begin{tikzcd}
{\mathcal{M}^{G_0}_{h, \infty}} \arrow[r, dotted] \arrow[d, "\pi_{\mathrm{GH}}"]  & {``\mathcal{M}_{h, \infty}"} \arrow[d, "\pi_{\mathrm{GH}}^{\text{glob}}", dotted] \\
\mathbb{P}(\mathbf{D}(G_0)) \arrow[r, dotted]                               & {\text{some sort of bundle of } \mathbb{P}^{h-1}}.                                   
\end{tikzcd}
\]

Since the stack $\mathcal{M}_{h, \infty}$ is \textit{highly} infinite as a spectral space, we instead look for an appropriate replacement for the source of the map.  We pause to consider what such a thing might look like. Its $\F_q$ points would be \textit{isogeny} classes of Dieudonn\'{e} modules of rank $h$. 

\begin{defn} The \textbf{Kottwitz set} $B(\GL_h)$ is defined as the set of $\sigma$-conjugacy classes of $\GL_h(W(\overline{\F}_p))$, where $\sigma$ is the Frobenius over $\overline{\F}_p$. This classifies isocrystals by their slope decomposition. \end{defn}

\begin{defn} 
If we only wish to look at isocrystals corresponding to Dieudonn\'{e} modules, we restrict our slopes to lay between $[0,1]$. Let $B'(GL_h)$ be the set of $\sigma$-conjugacy classes of $\GL_h(W(\overline{\F}_p))$ with slopes lying in $[0,1]$.
\end{defn}

\begin{defn} 
$\mathrm{Bun}_{h}(X_{W(\overline{\F}_p)})$ is an Artin stack in diamonds, whose underlying set of points is $B'(\GL_h)$. $\mathrm{Bun}_{h}(X_{W(\overline{\F}_p)})$ classifies vector bundles of rank $h$ on the Fargues-Fontaine curve. 
\end{defn} 

This is the subject of Fargues' work on the geometrization of the Langlands program over $\Q_p$. As we discussed earlier, $p$-divisible groups appear as modifications of vector bundles.

There is a Hecke stack classifying such modifications, which surjects onto the stack $\mathrm{Bun}_h(X_{W(\overline{\F}_p)})$. This Hecke stack satisfies the requirement that its $\F_q$ points are isomorphism classes of Dieudonn\'{e} modules of rank $h$. 

\subsubsection{A Cursory Introduction to our Hecke Stack}
\label{sec: hecke}
Let's briefly discuss our Hecke stack for the algebraic group $\mathrm{GL}_h$. We will use it in the least generality, and thus only define what we use.

\begin{defn} We define $\mathrm{Hecke}$ by its functor of points as follows. Let $C \in \mathrm{Perf}$, and $x, y \in X_{C}$. We take $\mc{E}, \mc{F} \in \mathrm{Bun}_h(X_{C})$.

$$\mathrm{Hecke}(C) := \{ (\mc{E}, \mc{F}, f \colon \mc{F}|_{X_{C} \setminus\{x\}} \simeq \mc{E}|_{X_{C} \setminus\{y\}}) \}.$$

\end{defn}

We then get a Hecke correspondence:

\begin{center} 
\begin{tikzcd}
                                 & \mathrm{Hecke} \arrow[ld] \arrow[rd]                             &                                  \\
\mathrm{Bun}_h(X_{C}) & {\{\mc{E}, \mc{F}, f \}} \arrow[ld, maps to] \arrow[rd, maps to] & \mathrm{Bun}_h(X_{C}) \\
\mc{E}                           &                                                                  & \mc{F}.                          
\end{tikzcd}
\end{center}

\subsubsection{The Construction of the Global Crystalline Period Map}

Recall that in the case of the (local) crystalline period map, the universal vector extension of $G_{\mathrm{univ}}$ gives us the short exact sequence 
$$\fLie(VG_{\mathrm{univ}}) \hookrightarrow \fLie(EG_{\mathrm{univ}}) \twoheadrightarrow \fLie(G_{\mathrm{univ}}),$$ 
evaluated on the pd-thickening $ W(k)\llbracket u_1, ..., u_{h-1}\rrbracket \twoheadrightarrow k\llbracket u_1, ..., u_{h-1}\rrbracket$. This gives us a period map to $\mathbb{P}(\fLie(EG_{\mathrm{univ}}))$, via $$G_a \mapsto \ker(\fLie(EG_a) \twoheadrightarrow \fLie(G_a)).$$ 

In the global setting, we try the following, which we phrase in terms of $C$-points for a perfectoid space $C$. Let $G$ be a $p$-divisible group over $C$, and $\fLie(G)$ its Lie algebra. Let $\mathcal{E}(G)$ be the corresponding vector bundle on $X_{C}$.  Then we have a map of quasi-coherent sheaves on $C$:

\[
\begin{tikzcd}
i_\infty^*\mathcal{E}(G) \arrow[r] \ar["\simeq \text{by S.W.}"]{d} & i_\infty^*(i_\infty)_*\fLie(G)  \arrow[d, "\simeq"]\\
\fLie(EG) \otimes C                 &                                      \fLie(G)     
\end{tikzcd}
\]
where the first map is $i_\infty^*$ applied to the map
$$\mathcal{E}(G) \longrightarrow (i_\infty)_*\fLie(G)$$ 
of sheaves on $X_{C}$, and the second map comes from the adjunction $(i_\infty^*, (i_\infty)_*)$.

\begin{defn} Let $\mathrm{Perf}$ be the category of perfectoid spaces. A \textbf{diamond} is a pro-\'etale sheaf on $\mathrm{Perf}$ such that it may be represented by the quotient of a perfectoid space by a pro-etale equivalence relation. \end{defn}

The vector bundle associated to a $p$-divisible group is naturally a diamond. We take the fully faithful embedding of the local crystalline period map into the category of diamonds. 

\begin{defn} We define the diamond $\mathrm{PerDom}$, our global period domain, by its functor of points as follows: for $C \in \mathrm{Perf}$,

$$\mathrm{PerDom}(C) := \coprod_{\mathcal{H} \in \mathrm{Bun}_h(X_{C})} \mathbb{P}(i_\infty^* \mathcal{H}).$$ 

We may equivalently define $$\mathrm{PerDom} := \mathbb{P}(i_\infty^* \mathrm{EBun})$$ where $\mathrm{EBun}$ is the universal vector bundle on $\mathrm{Bun}_h(X_{C})$. 

\end{defn}

\begin{thm} \label{diagram} By construction, the following is a commutative diagram of diamonds. 

\[
\begin{tikzcd}
{\mathcal{M}^{G_0, {\diamond}}_{h, \infty}} \arrow[rr] \arrow[d, "\pi_{\mathrm{GH}}^{\diamond}"] &  & \mathrm{Hecke} \arrow[d, "\pi_{\mathrm{GH}}^{\mathrm{glob}}"] \\
\mathbb{P}^{h-1 \text{ } \diamond} \arrow[rr]                                &  & \mathrm{PerDom} \arrow[d, "\mathrm{proj}"]           \\
                                                     &  & \mathrm{Bun}_h(X_{C})                            
\end{tikzcd}
\]

The maps are as follows:

\[
\begin{tikzcd}
                                                                 & G \arrow[rrdd, maps to] &                                                                         &                                                     \\
{(G, \iota, \alpha)} \arrow[ru, maps to]                         & \mathcal{M}_h \arrow[rd]          &                                                                         &                                                     \\
{\mathcal{M}^{G_0}_{h, \infty}}^{\diamond} \arrow[rr] \arrow[d, "\pi_{\mathrm{GH}}^{\diamond}"'] \arrow[ru] &                         & \mathrm{Hecke} \arrow[d, "\pi_{\mathrm{GH}}^{\mathrm{glob}}"'] & ( \mc{E}(G), \mc{F}(G), TG \hookrightarrow i_\infty^*\mathcal{E}(G)) \arrow[d, maps to] \\
\mathbb{P}^{h-1 \text{ } \diamond} \arrow[rr]                                            &                         & \mathrm{PerDom} \arrow[d, "\mathrm{proj}"']                                 & \mathbb{P}(i_\infty^* \mathcal{E}(G)) \arrow[d]                 \\
                                                                 &        & \mathrm{Bun}_h(X_{C})                                                    & \mathcal{E}(G).                                        
\end{tikzcd}
\]
\end{thm}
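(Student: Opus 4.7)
The strategy is to verify commutativity on functor of points, so fix a perfectoid $C$ and a point $(G, \iota, \alpha) \in \mathcal{M}^{G_0, \diamond}_{h,\infty}(C)$. Because every map in the diagram lands in a diamond whose $C$-points are a set of isomorphism classes of linear-algebraic data, it suffices to show that the two routes produce the same datum. Concretely, the common target $\mathrm{PerDom}(C)$ parametrizes pairs $(\mathcal{H}, q)$ where $\mathcal{H} \in \mathrm{Bun}_h(X_{C})$ and $q\colon i_\infty^* \mathcal{H} \twoheadrightarrow \mathcal{L}$ is a line-bundle quotient, so I must show that both routes produce $\mathcal{H} = \mathcal{E}(G)$ together with the same quotient of $i_\infty^* \mathcal{E}(G)$.

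The key algebraic input is the Scholze--Weinstein identification $i_\infty^* \mathcal{E}(G) \simeq \fLie(EG) \otimes C$ invoked in the construction of $\pi_{GH}^{\mathrm{glob}}$. Combined with the adjunction $i_\infty^*(i_\infty)_* \fLie(G) \simeq \fLie(G)$, this carries the canonical quotient $i_\infty^* \mathcal{E}(G) \twoheadrightarrow \fLie(G)$ (coming from $\mathcal{E}(G) \to (i_\infty)_*\fLie(G)$ on $X_{C}$) to the classical Hodge quotient $\fLie(EG)\otimes C \twoheadrightarrow \fLie(G)$, which is precisely the output of the local Gross--Hopkins map $\pi_{GH}$ evaluated at $(G, \iota, \alpha)$. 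Chasing the top square: the across-then-down route sends $(G, \iota, \alpha)$ first to the Hecke datum $(\mathcal{E}(G), \mathcal{F}(G), f)$ built in Section~\ref{sec: hecke}, and then $\pi_{GH}^{\mathrm{glob}}$ lands it in the summand $\mathbb{P}(i_\infty^* \mathcal{E}(G)) \subset \mathrm{PerDom}(C)$ with the distinguished quotient above; meanwhile the down-then-across route first produces the quotient $\mathbf{D}(G_0)\otimes C \twoheadrightarrow \fLie(G)$ inside $\mathbb{P}^{h-1,\diamond}$ and then embeds into $\mathrm{PerDom}(C)$ via the rigidified trivialization of $\mathbf{D}(G_0)$, which under S.W.\ is exactly $i_\infty^* \mathcal{E}(G)$. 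The two quotients thereby agree.

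The principal obstacle is not logical but definitional: one must confirm that the Scholze--Weinstein comparison between the Dieudonn\'{e} crystal and the fiber at $\infty$ of the associated Fargues--Fontaine bundle is functorial in $G$ and intertwines the Hodge quotient on the crystal side with the quotient $\mathcal{E}(G) \to (i_\infty)_*\fLie(G)$ on the bundle side. This forces one to check that the formation of the universal vector extension, of the crystal evaluated on the appropriate pd-thickening, and of the bundle $\mathcal{E}(-)$ on $X_{C}$ all commute with the pullbacks involved, so that the map of quasi-coherent sheaves on $C$ drawn in the construction genuinely identifies the local and global period data on the nose rather than merely up to a noncanonical isomorphism. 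Once this compatibility is recorded, the bottom triangle---that the projection $\mathrm{PerDom} \to \mathrm{Bun}_h(X_{C})$ lands at $\mathcal{E}(G)$---is immediate from the definition of $\mathrm{PerDom}$, and the outer commutativity follows by pasting the top square to the verified triangle; the whole proof then reduces to a naturality chase on the functors $G \mapsto \fLie(EG)$ and $G \mapsto \mathcal{E}(G)$ modulo the S.W.\ input.
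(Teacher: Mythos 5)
Your proposal is correct and follows essentially the same route as the paper, which offers no argument beyond ``by construction'': you simply unwind the definitions of $\pi_{\mathrm{GH}}^{\diamond}$, the Hecke datum, and $\mathrm{PerDom}$ on $C$-points and observe that the Scholze--Weinstein identification $i_\infty^*\mathcal{E}(G) \simeq \fLie(EG)\otimes C$ together with the adjunction carries the canonical quotient onto the local Hodge quotient. Your added remark that one should record the functoriality of this comparison is a fair (and welcome) explicitation of what the paper leaves implicit, not a departure from its approach.
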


\begin{rem} Note that the fibers of $\mathrm{proj}$ consists of some projective bundle. The fact that the local period domain $\mathbb{P}^{h-1}$ fits into $\mathrm{PerDom}$ as fibers justifies our definition of $\pi_{\mathrm{GH}}^{\mathrm{glob}}$ as a global crystalline period map. \end{rem}

\section{Further Questions}
\begin{q} What is the analogue of the classical crystalline period map, replacing $H^1_{dR}$ with $QH^1_{dR}$? Is there a period map for $\omega \in \Delta_R$ with quotient the classical crystalline period map $\omega_{G} \hookrightarrow H^{dR}(G)$? Jonas McCandless suggested that $v$-sheaves might help. There is a difficulty because it is unclear how to recover the differentials that turn $\Omega^i$ into $H^i_{DR}(-)$ from the crystal $\Delta_R$.  \end{q}

\begin{rem} The inclusion $\omega_{G} \hookrightarrow H^1_{dR}(G^t)$, where $G^t$ is the Cartier dual, is equivalent to the inclusion we took of Hodge-structures $\fLie(VG) \to \fLie(G)$, in our definition of the period map. They differ only by the choice of definition of the Dieudonn\'{e} crystal. \end{rem}

\begin{q} Behold the Shimura varieties $S_K$ or their perfectoid bretheren $\mathrm{Sh}^\flat_\infty$ constructed by many (Kottwitz, Harris-Taylor, Carayol, Beilinson, Drinfel'd) as the global object in the global-local approach of computing the cohomology of Lubin-Tate space to realize the local and Jacquet-Langlands correspondence. How is $S_K$ related to the stack $\mathrm{Hecke}$ we use as the source of our global period map? Is there a relation between $H^*_{\acute{e}t}(S_K^{\text{ }\textrm{an}}; \overline{\Q}_\ell)$ and $H^*_{\acute{e}t}(\mathrm{Hecke}; \overline{\Q}_\ell)$? \end{q}

We discuss a partial answer to this question. Firstly, section 7.1 \cite{Fargues} describes the following ecosystem, where $g$ is the answer to the question above.

\[
\begin{tikzcd}
                                                                           & \mathrm{Flag}^{\diamond} \arrow[rd, "i"] &                                                \\
\mathrm{Sh}^{\flat}_\infty \arrow[rd, "f"'] \arrow[ru, "\pi_{HT}^{\diamond}"] \arrow[rr, "g"] &                               & \mathrm{Hecke} \arrow[ld, "\overleftarrow{h}"] \\
                                                                           & \mathrm{Bun}_G                &                                              
\end{tikzcd}
\]

\begin{q} What is the equivariance of the global crystalline period map? It will be na\"{i}vely equivariant on each fiber, but may have a twisted group action viewing the whole bundle at once.  \end{q}

\bibliographystyle{acm}
\bibliography{crysper_draft_the_final}

\end{document}